\def\az{\alpha}
\def\dist{{\mathop\mathrm{\,dist\,}}}
\def\loc{{\mathop\mathrm{\,loc\,}}}
\def\ez{\epsilon}
\def\boz{{\Omega}}
\def\wz{\widetilde}
\def\bint{{\ifinner\rlap{\bf\kern.35em--}
\int\else\rlap{\bf\kern.45em--}\int\fi}\ignorespaces}
\def\bbint{{\ifinner\rlap{\bf\kern.35em--}
\hspace{0.078cm}\int\else\rlap{\bf\kern.45em--}\int\fi}\ignorespaces}
\def\esup{\mathop\mathrm{\,esssup\,}}
\def\diam{{\mathop\mathrm{\,diam\,}}}
\newtheorem{thm}{Theorem}[section]
\newtheorem{lem}[thm]{Lemma}
\newtheorem{cor}[thm]{Corollary}
\newtheorem{defn}[thm]{Definition}
\numberwithin{equation}{section}
\theoremstyle{remark}
\def\bint{{\ifinner\rlap{\bf\kern.35em--}
\int\else\rlap{\bf\kern.45em--}\int\fi}\ignorespaces}
\begin{document}

\title[A density problem for Sobolev spaces on planar domains]
{A density problem for Sobolev spaces on planar domains}

\author{Pekka Koskela}
\author{Yi Ru-Ya Zhang}

\address{Department of Mathematics and Statistics \\
         P.O. Box 35 (MaD) \\
         FI-40014 University of Jyv\"as\-kyl\"a \\
         Finland}
\email{pekka.j.koskela@jyu.fi} 
\email{yi.y.zhang@jyu.fi}

\thanks{This work was supported by the Academy of Finland via the Centre of Excellence in Analysis and Dynamics Research (Grant no. 271983)}
\subjclass[2000]{46E35}
\keywords{Sobolev space, density}
\date{\today}



\begin{abstract}
We prove that for a bounded simply connected domain $\Omega\subset \mathbb R^2$, the Sobolev space $W^{1,\,\infty}(\Omega)$ is dense in $W^{1,\,p}(\Omega)$ for any $1\le p<\infty$. Moreover, we show that if $\Omega$ is Jordan, then $C^{\infty}(\mathbb R^2)$ is dense in $W^{1,\,p}(\Omega)$  for $1\le p<\infty$. 
\end{abstract}


\maketitle

\tableofcontents

\section{Introduction}

Let $\boz \subset\mathbb R^2$ be a domain. 
We define the first order Sobolev space $W^{1,p}(\boz),$ $1 \le p \le \infty,$ 
as the set
$$\left\{u \in L^p(\boz)\mid \nabla  u \in L^p(\boz;\,\mathbb R^2) \right\}. $$
Here  $\nabla u=(\frac{\partial u}{\partial x_1},\,  
\frac{\partial u}{\partial x_2})$ is the weak (or distributional) gradient
of a locally integrable function $u.$
We equip $W^{1,p}(\boz)$ with the non-homogeneous norm: 
$$\|u\|^p_{W^{1,p}(\boz)} = \int_{\boz} | u(x)|^p \, dx + \int_{\boz} |\nabla u(x)|^p \, dx$$
for $1 \le p < \infty$, and
$$\|u\|_{W^{1,\infty}(\boz)} =\esup_{x\in \boz} |u(x)|+\esup_{x\in \boz}|\nabla u(x)|. $$

For $1 \le p < \infty$, it is well-known that smooth functions are dense in $W^{1,p}(\Omega)$ for any domain $\Omega\subset \mathbb R^2$. 
Consequently, if $\Omega$ is a $ W^{1,p}$-extension domain, that is, there 
exists an extension operator $E\colon W^{1,\,p}(\Omega)\to 
W^{1,\,p}(\mathbb R^2):$ 
\begin{equation*}
Eu(x)=u(x) \text{ for } x\in \Omega \ \ \ \mbox{and} \ \ \ \ Eu\in W^{1,\,p}(\mathbb R^2),
\end{equation*}
then we can use 
global smooth functions to approximate functions in $ W^{1,p}(\Omega)$ 
with respect to $ W^{1,p}(\Omega)$-norm. 
Indeed one extends $u\in W^{1,\,p}(\Omega)$ to $Eu\in W^{1,\,p}(\mathbb R^2)$, picks a sequence $v_j\in C^{\infty}(\mathbb R^2)$ approximating $Eu$ in $W^{1,\,p}(\mathbb R^2)$-norm and then restricts these $v_j$ to 
$\overline{\Omega}$. 
Notice that Lipschitz domains are extension domains. 

If one only wishes to approximate by functions that are smooth up to the 
boundary, then the Lipschitz condition can be relaxed. Indeed,
if $\Omega$ satisfies a cone condition or the weaker segment condition, 
then $C^{\infty}(\overline\Omega)$ is dense in $W^{1,\,p}(\Omega)$.
However, it is easy to construct domains $\Omega$ for which 
$C^{\infty}(\overline\Omega)$ 
fails to be dense. For example, take $\Omega$ to be a slit disk:
the unit disk minus a radius. For all this see e.g. \cite{AF2003}.

A very different sufficient condition for the density of global smooth 
functions was given by J.L. Lewis in \cite{L1985}. He proved that 
$C^{\infty}(\mathbb R^2)$ is dense in $ W^{1,p}(\Omega)$ for every  
$1<p<\infty$ when $\Omega$ is a Jordan domain: the bounded component of 
$\mathbb R^2\setminus \gamma$, 
where $\gamma$ is a Jordan curve. Lewis's approximation procedure is based on 
extending the restriction of the function in question, from a suitable level
set, smoothly along the normal vector field of a fixed weak solution of the 
$q-$Laplace equation
$$\nabla\cdot(|\nabla u|^{q-2}\nabla u)=0$$
in $\Omega'$, where $\frac 1 q+\frac 1 p=1$ and $\Omega\subset\subset\Omega'$. 
In order to obtain the required estimates, he uses properties of solutions of 
these equations. This technique leaves the case $p=1$ open.

Subsequently, W. Smith, A. Stanoyevitch and D.A. Steganga showed in 
\cite{SSS1994} that domains which satisfy their interior segment property 
allow approximation of functions in $W^{1,p}(\Omega)$ for $1\le p<\infty$
by bounded smooth 
functions with bounded derivatives and with global smooth functions if 
the boundary of $\Omega$ satisfies a suitable additional 
exterior density condition.
Their interior segment property is weaker than the usual segment property
that actually implies that the boundary is locally the graph of a continuous
function. In \cite{SSS1994} it was also inquired if the measure density 
together with lack of two-sided boundary points would suffice for the density 
of $C^{\infty}(\overline\Omega),$ but C.J. Bishop \cite{B1996} gave a
counterexample to this statement.

More recently, A. Giacomini and P. Trebeschi established in \cite{GT2007} 
density results that especially yield the density of $W^{1,\,2}(\Omega)$ in
$W^{1,\,p}(\Omega)$ for all $1\le p <2$ when $\Omega$ is bounded and simply
connected. They use the Helmholtz decomposition 
of $L^2(\Omega,\,\mathbb R^2)$ to characterize the orthonormal subspaces of 
certain Sobolev spaces. Thus only the density of $W^{1,\,2}(\Omega)$ can be
obtained by this technique.

Based on the results above, it is natural to inquire if $W^{1,\,q}(\Omega)$ is
always dense in $W^{1,\,p}(\Omega)$ for some $q>p$ when $\Omega$ is bounded
and simply connected and if even global smooth functions are dense in 
$W^{1,1}(\Omega)$ when $\Omega$ is Jordan. Our first result gives an even
stronger conclusion for the the first problem.

\begin{thm}\label{mainthm 1}
If $\Omega\subset \mathbb R^2$ is a bounded simply connected domain, 
then $W^{1,\,\infty}(\Omega)$ is dense in $W^{1,\,p}(\Omega)$
for 
any $1\le p< \infty.$  
\end{thm}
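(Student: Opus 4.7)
The plan is to reduce to the case of bounded $u$ via truncation, then exhaust $\Omega$ by simply connected Lipschitz polygonal subdomains, extend $u$ across the boundary of each by a reflection-type operator, mollify, and glue together by a diagonal argument. The standard truncation $T_M u = \min\{M,\max\{-M,u\}\}$ converges to $u$ in $W^{1,p}(\Omega)$ as $M\to\infty$, so it suffices to approximate $u\in W^{1,p}(\Omega)\cap L^\infty(\Omega)$.

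The key exhaustion step uses simple connectedness crucially. From the Whitney decomposition of $\Omega$, let $\Omega_k^0$ be the union of Whitney squares of side length at least $2^{-k}$. This set may fail to be simply connected, but since $\Omega$ is simply connected---equivalently, since $\mathbb{R}^2\setminus\Omega$ is connected---any bounded complementary component of $\Omega_k^0$ in $\mathbb{R}^2$ lies entirely inside $\Omega$, so filling those in produces a simply connected polygonal Lipschitz subdomain $\Omega_k\subset\subset\Omega$. One can arrange that the boundary Lipschitz constants are uniform in $k$, that $\Omega_k\nearrow\Omega$ with $|\Omega\setminus\Omega_k|\to 0$, and that $\Omega\setminus\Omega_k$ lies within a $C\,2^{-k}$-neighborhood of $\partial\Omega_k$.

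Now for each $k$, since $\Omega_k$ is a bounded Lipschitz extension domain, one picks a reflection-type extension $w_k\in W^{1,p}(\mathbb{R}^2)\cap L^\infty(\mathbb{R}^2)$ of $u|_{\Omega_k}$ with uniform extension constant and preserving the $L^\infty$-bound. By the locality of reflection extension, the values and gradient of $w_k$ at a point outside $\Omega_k$ at distance $d$ from $\partial\Omega_k$ are controlled by those of $u$ on a band inside $\Omega_k$ of width comparable to $d$. Because $\Omega\setminus\Omega_k$ sits within the $C\,2^{-k}$-neighborhood of $\partial\Omega_k$, absolute continuity of the Lebesgue integral then yields $\|w_k\|_{W^{1,p}(\Omega\setminus\Omega_k)}\to 0$ as $k\to\infty$. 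Mollifying at a scale $\epsilon_k$ chosen much smaller than $2^{-k}$ gives $v_k := w_k*\eta_{\epsilon_k}\in C^\infty(\mathbb{R}^2)\cap W^{1,\infty}(\mathbb{R}^2)$, whose restriction to $\Omega$ lies in $W^{1,\infty}(\Omega)$. To estimate $\|v_k-u\|_{W^{1,p}(\Omega)}$ one splits on $\Omega_k$ and on $\Omega\setminus\Omega_k$: on $\Omega_k$ one uses that $w_k=u$ there so that $v_k = u*\eta_{\epsilon_k}$ on the $\epsilon_k$-interior $(\Omega_k)_{\epsilon_k}$, and controls the thin collar using the Young-type bound $\|\nabla v_k\|_{L^p(A)}\le\|\nabla w_k\|_{L^p(A^{\epsilon_k})}$ together with absolute continuity; on $\Omega\setminus\Omega_k$ one combines the same Young bound with the decay of $\|w_k\|_{W^{1,p}(\Omega\setminus\Omega_k)}$ and of $\|u\|_{W^{1,p}(\Omega\setminus\Omega_k)}$.

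The heart of the argument is the exhaustion: simple connectedness of the planar domain $\Omega$ enters both in producing the simply connected Lipschitz polygons and, more crucially, in guaranteeing that $\Omega\setminus\Omega_k$ is a geometrically thin neighborhood of $\partial\Omega_k$ whose width is controlled by the Whitney scale $2^{-k}$. Without this thin-collar property, the reflection extension $w_k$ would not have vanishing $W^{1,p}$-norm on $\Omega\setminus\Omega_k$, and the diagonal balancing of the mollification scale $\epsilon_k$ against the exhaustion rate $2^{-k}$ would break down---the gradient $\|\nabla v_k\|_{L^\infty}\sim 1/\epsilon_k$ would then dominate the contribution on the complement.
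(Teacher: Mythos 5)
Your approach is genuinely different from the paper's (which decomposes $\Omega$ via a conformal map from the disk into Whitney-type images of dyadic annular pieces, builds a partition of unity adapted to this decomposition using Gehring--Hayman and inner-distance estimates, and replaces $u$ in the outer collar by local averages weighted by the partition). Your plan rests instead on a Whitney-square exhaustion $\Omega_k$, a uniform Lipschitz reflection extension, and mollification. Unfortunately there is a genuine gap in a place that is exactly where the difficulty of the problem lives.

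The fatal problem is the claim that \emph{``$\Omega\setminus\Omega_k$ lies within a $C\,2^{-k}$-neighborhood of $\partial\Omega_k$.''} This is false for general bounded simply connected domains. What the Whitney decomposition gives is $\dist(x,\partial\Omega)\lesssim 2^{-k}$ for $x\in\Omega\setminus\Omega_k$, and $\dist(\partial\Omega_k,\partial\Omega)\sim 2^{-k}$; these do \emph{not} imply that $x$ is within $C\,2^{-k}$ of $\partial\Omega_k$. For instance, if $\Omega$ contains an outward cusp or a long thin corridor whose width decays below $2^{-k}$ (or a spiral whose windings become arbitrarily thin), the set $\Omega\setminus\Omega_k$ contains the entire far end of that corridor, at Euclidean distance much larger than $2^{-k}$ (even of order $1$) from $\partial\Omega_k$. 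Consequently the ``locality of reflection'' you invoke does not cover $\Omega\setminus\Omega_k$: outside the reflection collar of width $\sim 2^{-k}$ your extension must be patched by a cutoff, and the resulting gradient term $|\nabla\eta_\infty|\,|u\circ\rho|\sim 2^{k}\|u\|_\infty$ is not controlled by $\|\nabla u\|_{L^p}$ on a vanishing set. The estimate $\|w_k\|_{W^{1,p}(\Omega\setminus\Omega_k)}\to 0$ therefore does not follow. This is precisely the obstruction that forces the paper to work with \emph{inner} distance and hyperbolic geometry rather than Euclidean distance: in the paper's decomposition (equations \eqref{curve separation}--\eqref{set separation}), the separation of the boundary pieces $S_j$ is measured in $\dist_\Omega$, and the partition of unity is scaled accordingly, so the approximating function $u_m$ is constant (equal to $a_j$) deep in each tube or spiral arm, producing no bad gradient there.

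There is a second, smaller gap: the assertion that ``one can arrange that the boundary Lipschitz constants are uniform in $k$'' for the filled-in Whitney union $\Omega_k$ is stated without proof. A union of dyadic squares can have pinch points where two included squares meet only at a corner while both remaining quadrants connect to the unbounded complement; such a configuration is not a Lipschitz boundary and is not repaired by filling bounded complementary components. Even when pinch points are absent, the uniformity of the extension constant needs an argument. In the paper this entire issue is sidestepped: no Lipschitz subdomain is required, and the only geometric inputs are the conformal distortion estimate (Lemma~\ref{linear map}), a capacity lower bound, and the Gehring--Hayman inequality.

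So while the truncation/mollification framing at the start and end of your outline is fine, the central geometric input is missing, and the Euclidean collar you rely on simply does not exist for general simply connected planar domains.
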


There are plenty of bounded simply connected non-Jordan domains that fail the
interior segment condition and hence Theorem~\ref{mainthm 1} is not
covered by the results discussed above. Our proof of Theorem~\ref{mainthm 1} 
is rather flexible. Especially, it allows us to solve the second problem posed
above and to give a new proof
for the aforementioned density result  \cite[Theorem 1]{L1985} by J.L. Lewis; 
other consequences of our approach will be recorded in a subsequent
paper.

\begin{cor}\label{global density}
If $\Omega$ is a planar Jordan domain, then 
$C^{\infty}(\mathbb R^2)$ is dense in $W^{1,\,p}(\Omega)$
for any $1\le p< \infty$. 
\end{cor}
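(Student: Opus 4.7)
The plan is to combine Theorem~\ref{mainthm 1} with an extension-and-mollification procedure that exploits the Jordan structure of $\partial\Omega$. Given $u\in W^{1,p}(\Omega)$ and $\epsilon>0$, Theorem~\ref{mainthm 1} yields $v\in W^{1,\infty}(\Omega)$ with $\|u-v\|_{W^{1,p}(\Omega)}<\epsilon/2$, so it suffices to approximate each $v\in W^{1,\infty}(\Omega)$ by $C^\infty(\mathbb R^2)$ functions in the $W^{1,p}(\Omega)$-norm.

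The heart of the argument is the construction of an extension $\tilde v \in W^{1,\infty}(\mathbb R^2)$ of $v$. Since $\Omega$ is a Jordan domain, $\mathbb R^2 \setminus \overline\Omega$ consists of a single unbounded component $\Omega^e$ whose boundary is again the Jordan curve $\partial\Omega$. I would take Whitney decompositions $\{Q_j\}$ of $\Omega$ and $\{Q_k^*\}$ of the exterior collar $\Omega^e \cap \{x : \mathrm{dist}(x,\partial\Omega)<1\}$ and, using a parametrization of $\partial\Omega$, match each $Q_k^*$ with an interior square $Q_{j(k)}$ of comparable side length at comparable distance from $\partial\Omega$. Define $\tilde v$ on each $Q_k^*$ to be the mean of $v$ over $Q_{j(k)}$, glue across the exterior cover by a partition of unity subordinate to doubled squares, and cut off smoothly to zero beyond the collar. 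Because $v$ has bounded gradient, the discrete difference $|\tilde v(Q_k^*) - \tilde v(Q_{k'}^*)|$ between neighboring exterior squares is controlled by $\|v\|_{W^{1,\infty}(\Omega)}$ times the diameter of $Q_k^* \cup Q_{k'}^*$, giving $\tilde v \in W^{1,\infty}(\mathbb R^2) \subset W^{1,p}(\mathbb R^2)$ for every $p<\infty$. Mollifying $\tilde v$ at sufficiently small scale then produces the required $\varphi\in C^\infty(\mathbb R^2)$.

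The main obstacle is the Whitney matching step. A Jordan boundary can be extremely wild --- spiraling, non-rectifiable, or even of positive Lebesgue measure --- so a naive metric matching could pair an exterior square with an interior square that is only reachable through a long detour in $\Omega$, thereby inflating the discrete gradient. The Jordan assumption, in contrast to the weaker simply connected assumption used in Theorem~\ref{mainthm 1}, is essential at this point: the one-parameter homeomorphism $\partial\Omega \cong S^1$ provides a canonical two-sided correspondence between interior and exterior approaches to $\partial\Omega$, ruling out topological obstructions such as the slit-disk case in which a single boundary point is accessible from $\Omega$ along two genuinely different routes. Checking that this correspondence can be implemented compatibly with the Whitney decompositions on both sides is the place where the serious geometric work resides; once this is in place, the $W^{1,\infty}$ bound on $v$ and the finite area of the collar make the remainder of the argument a standard mollification.
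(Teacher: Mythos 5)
Your reduction to $v\in W^{1,\infty}(\Omega)$ via Theorem~\ref{mainthm 1} is the right first move, but the extension step that follows contains a genuine gap. You claim that the Whitney-matching construction produces $\tilde v\in W^{1,\infty}(\mathbb R^2)$ extending $v$; this is false for general Jordan domains. A function in $W^{1,\infty}(\Omega)$ is Lipschitz only with respect to the inner metric $\dist_\Omega$, and for a Jordan domain the ratio $\dist_\Omega(x,y)/|x-y|$ can be unbounded. Take, for instance, $\Omega$ to be the unit disk with the cusp $K=\{(x,y):x\ge 0,\ |y|\le x^2\}\cap\overline{B(0,1)}$ removed: $\partial\Omega$ is a single Jordan curve, yet for small $t>0$ the points $(t,t^2)$ and $(t,-t^2)$ across the cusp satisfy $\dist_\Omega\bigl((t,t^2),(t,-t^2)\bigr)\sim t$ while the Euclidean distance is $\sim t^2$. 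The function $v(p)=\dist_\Omega(p,A)$, for a fixed continuum $A$ on one side of the cusp, lies in $W^{1,\infty}(\Omega)$ and has $|v(t,t^2)-v(t,-t^2)|\gtrsim t$, so it admits no Lipschitz extension to $\mathbb R^2$. In your Whitney-matching language: the exterior squares deep inside the cusp have side length $\sim t^2$, while the averages of $v$ over matched interior squares on opposite banks differ by $\sim t$, so the discrete gradient blows up like $1/t$. The Jordan hypothesis rules out two-sided boundary points (the topological obstruction you cite), but has no bearing on this metric failure of quasi-convexity, which your construction cannot repair.

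The paper sidesteps the extension problem entirely. Rather than extending a black-box $v\in W^{1,\infty}(\Omega)$, it encloses $\Omega$ in a decreasing sequence of Lipschitz domains $G_s\supset\supset\Omega$ converging to $\Omega$ in Hausdorff distance, prolongs the separating curves $\gamma_j$ of Section~\ref{Decomposition and partition of unity} across the thin collar $G_s\setminus\Omega$, and re-runs the partition-of-unity construction of Theorem~\ref{mainthm 1} inside $G_s$. The resulting approximant $u_m=u\psi+\sum_j a_j\phi_j$ is then Lipschitz on all of $G_s$ by construction (the constants $a_j$ and the cut-offs $\phi_j$ are controlled across $G_s\setminus\Omega$), and since $G_s$ is a Lipschitz domain this function extends to a global Lipschitz function, which is mollified and diagonalized. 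The essential point you are missing is that the approximant must be built so as to be Lipschitz in a full neighbourhood of $\overline\Omega$; one cannot in general recover this after the fact from an arbitrary element of $W^{1,\infty}(\Omega)$.
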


Theorem~\ref{mainthm 1} and Corollary~\ref{global density} also give 
consequences for $BV(\Omega)$, 
the collection of functions in $L^1(\Omega)$ with bounded variation.
Indeed, given $u\in BV(\Omega)$ one always has a sequence of functions 
$u_j\in W^{1,1}(\Omega)$ that converges to $u$ in $L^1(\Omega)$ and
so that the $BV$-energy of $u,$ $||Du||(\Omega),$ satisfies
$$||Du||(\Omega)=\lim_j||\nabla u||_{L^1(\Omega)}.$$
Based on Theorem~\ref{mainthm 1} and Corollary~\ref{global density}, we may
further assume that $u_j\in W^{1,\,\infty}(\Omega)$ when $\Omega$ is bounded
and simply connected and even that each 
$u_j$ is the restriction
of a global smooth function when $\Omega$ is Jordan. For the theory
of $BV$-functions we refer the reader to \cite{AFP}.
 
The research of this paper has been partially motivated by
our attempts to give geometric characterizations for bounded simply connected
$W^{1,p}$-extension domains. Indeed, our solution for the case $1<p<2$ 
in \cite{kry1} uses Lewis's result (Corollary~\ref{global density} for $p>1$). 
For the case
$p=1$ we need both Theorem~\ref{mainthm 1} and Corollary~\ref{global density},
see \cite{kry2}.

The paper is organized as follows. Section 2 contains some 
preliminaries. We give a decomposition of a bounded 
simply connected planar 
domain $\Omega$ and the corresponding partition of unity in Section 3. 
Section 4 contains the proofs of Theorem~\ref{mainthm 1} and 
Corollary \ref{global density}. 

The notation in this paper is quite standard. For example, $C(\cdot)$ refers
to a constant that may depend on the given parameters. As usual, the value
$C(\cdot)$ may
vary between appearences, even within a chain of inequalities.
By $a \sim b$ we mean that $b/C \le a \le Cb$ for some constant 
$C \ge 2$.
If we need to make the dependence of this costant on the parameters 
$(\cdot)$ explicit, we write $a {\sim}_{(\cdot)} b$. 
Also $a \lesssim b$ means $a\le C b$ with $C\ge 1$, and 
$a\gtrsim b$ has the analogous meaning. 
The Euclidean distance between the sets $A,\,B \subset \mathbb R^n$ is 
denoted by $\dist(A,\,B)$. 
We denote by $\ell(\gamma)$  the length of a curve $\gamma$. 
The Euclidean disk centered at $x$ and with radius $r$ is referred to by 
$B(x,\,r)$, and $S^{1}(x,\,r)$ is the circle of radius $r,$ centered at $x.$  
For a set $A\subset \mathbb R^n$, we refer to its interior by $A^o,$ 
to the boundary by $\partial A,$  and to the closure by $\overline A.$ 
As usual, $A\subset\subset B$ means that $A$ is compactly contained in $B$.

\section{Preliminaries}

In this section, we introduce some necessary definitions and facts. 
To begin with, we recall the definition of a Whitney-type set.

\begin{defn}\label{whitney-type set}
 A connected set $A \subset \boz\subset \mathbb R^2$ is called a 
$\lambda$-Whitney-type set in $\Omega$ 
with some 
constant $\lambda\ge 1$ if the following holds. 

(i) There exists a disk of radius $\frac {1}{\lambda}\diam(A)$ contained 
in $A$;

(ii) $ \frac {1}{\lambda} \diam(A)\le \dist(A,\,\partial\Omega)\le {\lambda } \diam(A)$. 
\end{defn}

We define the {\it{inner distance with respect to $\Omega$}} between 
$x,\,y\in\Omega$ by
$$\dist_{\Omega}(x,\,y)=\inf_{\gamma\subset \Omega} \ell(\gamma),$$
where the infimum runs over all curves joining $x$ and $y$ in $\Omega.$
The inner diameter $\diam_{\Omega}(E)$ of a set $E\subset \Omega$ is then defined in
the usual way.

Let us recall some facts from complex analysis. 
First of all, recall that conformal maps preserve conformal 
capacities. More precisely, given a pair $E,\,F \subset \Omega\subset \mathbb R^2$ of continua, define
the {\it conformal capacity between $E$ and $F$ in $\Omega$} as
 $${\rm Cap}(E,\,F,\,\Omega)=\inf\{\|u\|^2_{ W^{1,\,2} (\Omega)}\mid  u\in\Delta(E,\,F )\},$$
 where $\Delta(E,\,F )$ denotes the class of all  
functions $u\in W^{1,\,2}_{\loc}(\Omega),$ continuous in $E\cup F\cup \Omega,$
 such that $0\le u\le 1$ on $\Omega$, $u=1$ on $E$, and $u=0$ on $F$. 
By definition, it is clear that if $\wz E,\,\wz F\subset\wz \Omega,$  
$E\subset \wz E$, $F\subset \wz F$ and $\Omega\subset \wz \Omega$,
  then $${\rm Cap}(E,\,F,\,\boz)\le {\rm Cap}(\wz E,\,\wz F,\, \wz \Omega).$$

Furthermore if $E,\,F\subset \mathbb D$ are two continua, then
\begin{equation}\label{condition of lower bound}
\frac{\min\{\diam(E),\,\diam(F)\}}{\dist(E,\,F)}\ge \delta>0 \quad 
\Longrightarrow \quad {\rm Cap}(E,\,F,\,\mathbb D)\ge C(\delta)>0.
\end{equation}
Moreover, when $\overline{B(x,\,r)}\subset B(x,\,R)\subset\subset \Omega$, we 
have 
\begin{equation}\label{capacity of balls}
{\rm Cap}(\overline{B(x,\,r)},\,S^1(x,\,R),\,\Omega)\sim \log\left(\frac R r\right)^{-1}. 
\end{equation}
 See \cite{V1971} for more details. Actually, \cite{V1971}
states these results for ``modulus'', but ``modulus'' is equivalent to
conformal capacity (see e.g. \cite[ Proposition 10.2, Page 54]{R1993}).

We need the following lemma.

\begin{lem}{\label{inner capacity}}
Let $E,\,F\subset \Omega$ be a pair of continua. 
Then if ${\rm Cap}(E,\,F,\, \Omega)\ge c_0$, we have
$${\min\{\diam_{\Omega}(E),\,\diam_{\Omega}(F)\}}\gtrsim {\dist_{\Omega}(E,\,F)}, $$
where the constant only depends on $c_0$. 
\end{lem}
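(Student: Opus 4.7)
The plan is to prove the inequality directly by exhibiting, from the data, a test function for ${\rm Cap}(E,F,\Omega)$ whose Dirichlet energy is $\lesssim 1/\log(D/d)$, where $D:=\dist_{\Omega}(E,F)$ and $d:=\min\{\diam_{\Omega}(E),\diam_{\Omega}(F)\}$; the hypothesis ${\rm Cap}(E,F,\Omega)\ge c_0$ will then force $D/d$ to be bounded in terms of $c_0$. Assume without loss of generality that $d=\diam_\Omega(E)$, fix $e_0\in E$, and set $r(x):=\dist_\Omega(x,e_0)$, so that $r\le d$ on $E$ (by definition of the inner diameter and $e_0\in E$) and $r\ge D$ on $F$.

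Two elementary properties of $r$ do all the work. First, since $\dist_\Omega(x,y)=|x-y|$ whenever $|x-y|<\dist(x,\partial\Omega)$, the function $r$ is locally $1$-Lipschitz in the Euclidean sense, hence continuous on $\Omega$ and satisfies $|\nabla r|\le 1$ almost everywhere. Second, the trivial domination $r(x)\ge|x-e_0|$ yields the Lebesgue-measure bound $|\{r<R\}|\le \pi R^2$ for every $R>0$. Now define the logarithmic cut-off $\psi:[0,\infty)\to[0,1]$ by $\psi(t)=1$ for $t\le d$, $\psi(t)=\log(D/t)/\log(D/d)$ for $d<t<D$, and $\psi(t)=0$ for $t\ge D$, and set $u:=\psi\circ r$. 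Then $u$ is locally Lipschitz on $\Omega$, continuous, in $W^{1,2}_{\loc}(\Omega)$, with $0\le u\le 1$, $u=1$ on $E$ and $u=0$ on $F$, so $u\in\Delta(E,F)$, and the chain rule together with $|\nabla r|\le 1$ gives
\[
|\nabla u(x)|\le \frac{\chi_A(x)}{r(x)\log(D/d)}\quad\text{a.e., with }A:=\{d<r<D\}.
\]

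Decompose $A$ dyadically into shells $A_k:=\{2^kd<r<2^{k+1}d\}$ for $k=0,1,\dots,K$, $K\simeq\log_2(D/d)$. The measure bound yields
\[
\int_{A_k}\frac{dx}{r^2}\le \frac{|A_k|}{(2^kd)^2}\le \frac{\pi(2^{k+1}d)^2}{(2^kd)^2}=4\pi,
\]
and summing over $k$ gives $\int_A r^{-2}\,dx\lesssim \log(D/d)$, so
\[
c_0\le{\rm Cap}(E,F,\Omega)\le \int_\Omega|\nabla u|^2\,dx\lesssim \frac{1}{\log(D/d)},
\]
which rearranges to $d\gtrsim D$ with a constant depending only on $c_0$. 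I expect the main conceptual hurdle to be recognizing that the inner distance $r$ from a fixed basepoint serves as a Euclidean-Lipschitz "radial coordinate" on $\Omega$, so that the classical logarithmic extremal for the capacity of a planar annulus transfers verbatim to the inner-metric setting; once this is in place, only the domination $r\ge|\cdot-e_0|$ is needed to transfer the area estimate from Euclidean to inner balls, and neither boundedness nor simple-connectedness of $\Omega$ is used.
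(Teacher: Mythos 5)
Your proof is correct and is essentially the paper's argument: both construct the same logarithmic test function of the inner distance to a fixed point of $E$ and bound its Dirichlet energy by a dyadic annular decomposition with respect to the inner metric. You have merely spelled out what the paper compresses into ``a direct calculation,'' namely that $r=\dist_\Omega(\cdot,e_0)$ is locally $1$-Lipschitz with $|\nabla r|\le 1$ and that the trivial bound $r\ge|\cdot-e_0|$ yields $|\{r<R\}|\le\pi R^2$, which controls each dyadic shell.
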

\begin{proof}
We may assume that $\diam_{\Omega}(E)\le \diam_{\Omega}(F)$ and 
$2\diam_{\Omega}(E) \le \dist_{\Omega}(E,\,F)$. 
Let $z\in E$, and $\frac{\dist_{\Omega}(E,\,F)}{\diam_{\Omega}(E)}=\delta$. 
We define
$$ f(x)=
 \begin{cases}
   1, & \text{if } \dist_{\Omega}(x,\,z)\le \diam_{\Omega}(E) \\
  0, & \text{if}  \dist_{\Omega}(x,\,z)\ge \dist_{\Omega}(E,\,F)\\
      \frac {\log (\dist_{\Omega}(E,\,F))-\log (\dist_{\Omega}(x,\,z))}{\log (\dist_{\Omega}(E,\,F))-\log (\diam_{\Omega}(E))}, &\text{otherwise }
 \end{cases}.$$

Then a direct calculation via a dyadic annular decomposition with respect
to the inner distance gives
$$c_0\le \int_{\Omega} |\nabla f|^2\, dx\lesssim {\log (\delta)}^{-1}. $$
Hence $\delta\lesssim e^{c_0}$, which means that $\dist_{\Omega}(E,\,F) \lesssim \diam_{\Omega}(E)$. 
\end{proof}

Recall that hyperbolic geodesics in $\mathbb D$ are arcs of 
(generalized) circles that intersect the 
unit circle orthogonally. Moreover, both the hyperbolic metric and hyperbolic
geodesics are preserved under conformal maps; 
see \cite[Page 37]{AIM2009} for instance.
We refer to the hyperbolic distance between a pair of points $x,y$ in a 
simply connected planar domain by 
$\dist_{h}(x,\,y).$

The following lemma states a distortion property of conformal maps. 

\begin{lem}[\cite{AIM2009}, Theorem 2.10.8]\label{linear map}
Suppose $\varphi$ is conformal in the unit disk $\mathbb D$ and $z,\,w\in \mathbb D$. Then
$$\exp{(-3\dist_h(z,\,w))}|\varphi'(w)|\le |\varphi'(z)| \le\exp{(3\dist_h(z,\,w))}|\varphi'(w)|. $$
\end{lem}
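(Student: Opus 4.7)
The plan is to prove the estimate by reducing to a pointwise bound on $|\varphi''/\varphi'|$ in terms of the hyperbolic density, and then integrating along a hyperbolic geodesic. Since $\varphi$ is conformal, $\varphi'$ is non-vanishing on the simply connected domain $\mathbb D$, so $g := \log \varphi'$ is a well-defined holomorphic function. Its real part $u = \log|\varphi'|$ is harmonic, and
$$|\nabla u(\zeta)| = |g'(\zeta)| = \left|\frac{\varphi''(\zeta)}{\varphi'(\zeta)}\right|.$$

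The crucial pointwise input is the classical Koebe derivative estimate
$$(1-|\zeta|^2)\left|\frac{\varphi''(\zeta)}{\varphi'(\zeta)}\right| \le 6 \quad \text{for all } \zeta \in \mathbb D.$$
I would obtain this via the standard renormalization trick. Given $\zeta_0 \in \mathbb D$, let $\psi$ be the Möbius automorphism of $\mathbb D$ sending $0$ to $\zeta_0$, and consider
$$F(w) = \frac{\varphi(\psi(w)) - \varphi(\zeta_0)}{(1-|\zeta_0|^2)\varphi'(\zeta_0)}.$$
Then $F$ is univalent on $\mathbb D$ with $F(0)=0$ and $F'(0)=1$, so it lies in the Schlicht class, and Bieberbach's inequality $|a_2(F)| \le 2$ applies. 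A direct computation gives $a_2(F) = \tfrac12(1-|\zeta_0|^2)\varphi''(\zeta_0)/\varphi'(\zeta_0) - \overline{\zeta_0}$, and rearranging together with $|\zeta_0| \le 1$ yields the claimed bound.

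Given $z,w \in \mathbb D$, I would then let $\gamma$ be the hyperbolic geodesic joining $z$ and $w$, and apply the fundamental theorem of calculus to the smooth function $u$ along $\gamma$:
$$|u(z) - u(w)| \le \int_\gamma |\nabla u|\,|d\zeta| \le \int_\gamma \frac{6}{1-|\zeta|^2}\,|d\zeta| = 3 \int_\gamma \frac{2\,|d\zeta|}{1-|\zeta|^2} = 3\,\dist_h(z,w),$$
using that the hyperbolic length element on $\mathbb D$ is $2|d\zeta|/(1-|\zeta|^2)$ and the fact that $\gamma$ realises the hyperbolic distance. Exponentiating delivers the stated two-sided inequality.

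The main technical input, and the step I expect to be the least routine, is the Koebe derivative estimate; it ultimately rests on Bieberbach's area theorem for univalent functions and is the only place where the univalence of $\varphi$ (as opposed to mere holomorphy with non-vanishing derivative) is used. Everything else in the argument is an elementary computation with harmonic functions and the conformal invariance of the hyperbolic metric.
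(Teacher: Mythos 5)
Your proof is correct, and it is essentially the standard textbook derivation; the paper itself does not prove this lemma but cites it directly from Astala--Iwaniec--Martin (Theorem 2.10.8), where it is established by precisely this Koebe/Bieberbach distortion argument. The computation of $a_2(F)$ is right: with $\psi(w)=(w+\zeta_0)/(1+\overline{\zeta_0}w)$ one has $\psi'(0)=1-|\zeta_0|^2$ and $\psi''(0)=-2\overline{\zeta_0}(1-|\zeta_0|^2)$, giving $F'(0)=1$ and $a_2(F)=\tfrac12(1-|\zeta_0|^2)\varphi''(\zeta_0)/\varphi'(\zeta_0)-\overline{\zeta_0}$, so $|a_2|\le 2$ and $|\zeta_0|<1$ yield $(1-|\zeta_0|^2)|\varphi''/\varphi'|\le 6$ as you say. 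The identity $|\nabla u|=|g'|=|\varphi''/\varphi'|$ for $u=\operatorname{Re}\log\varphi'$ is a clean consequence of the Cauchy--Riemann equations, and integrating along the hyperbolic geodesic with the density $2/(1-|\zeta|^2)$ turns the pointwise bound into $|\log|\varphi'(z)|-\log|\varphi'(w)||\le 3\dist_h(z,w)$, which exponentiates to the stated two-sided estimate. The only thing worth flagging is that the constant $3$ is tied to the normalization of the hyperbolic metric as $2|d\zeta|/(1-|\zeta|^2)$; this matches the convention in the cited reference, so no discrepancy arises.
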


Given a $\lambda$-Whitney type set $A\subset \mathbb D,$ one has that 
$\dist_h(z,\,w)\le C(\lambda)$ for all $z,w\in A,$ 
Hence $|\varphi'(w)|\sim |\varphi'(z)|$ with a constant only depending on 
$\lambda.$

By Lemma~\ref{linear map}, condition \eqref{condition of lower bound} and the 
capacity estimate 
\eqref{capacity of balls}, one can verify the following well-known result. 

\begin{lem}\label{whitney preserving}
Suppose $\varphi \colon \Omega \to \Omega'$ is conformal, where $\Omega=\mathbb D$ or 
$\Omega'=\mathbb D,$
and $Q\subset \Omega$ is a $\lambda_1$-Whitney-type set. Then 
$\varphi(Q)\subset \Omega'$ is a $\lambda_2$-Whitney-type set with $ \lambda_2=\lambda_2(\lambda_1)$.  \end{lem}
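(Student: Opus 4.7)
The plan is to establish the lemma in the case $\Omega=\mathbb D$; the other case follows by applying the result to $\vz^{-1}\colon \mathbb D\to\Omega$. Setting $d:=\diam(Q)$ and choosing $z_0\in Q$ with $B(z_0,d/\lambda_1)\subset Q$, the first step is to observe that for every $z\in Q$ the Whitney conditions $|z-z_0|\le d$ and $\dist(z,\partial\mathbb D)\ge d/\lambda_1$ force, via a standard computation with the hyperbolic density $2/(1-|\cdot|^2)$ on $\mathbb D$, a uniform bound $\dist_h(z,z_0)\le R(\lambda_1)$. Lemma~\ref{linear map} then yields $|\vz'(z)|\sim_{\lambda_1} M$ throughout $Q$, where $M:=|\vz'(z_0)|$; this single scaling factor will govern every subsequent estimate.

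For the interior disk required by the Whitney definition, Koebe's quarter theorem applied to the univalent map $\vz$ restricted to $B(z_0,d/\lambda_1)$ gives $\vz(B(z_0,d/\lambda_1))\supset B(\vz(z_0),Md/(4\lambda_1))$, which simultaneously supplies the inner disk of $\vz(Q)$ and the lower bound $\diam\vz(Q)\gtrsim_{\lambda_1}Md$. For the matching upper bound I connect $z_0$ to any $z\in Q$ by the hyperbolic geodesic $\gz$, whose hyperbolic length is at most $R(\lambda_1)$; along $\gz$ the quantity $\dist(\cdot,\partial\mathbb D)$ varies by at most a factor $e^{R(\lambda_1)}$ (because the logarithmic derivative of $\dist(\cdot,\partial\mathbb D)$ is dominated by the hyperbolic density), so it stays $\lesssim_{\lambda_1} d$ and the Euclidean length of $\gz$ is $\lesssim_{\lambda_1} d$. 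Combined with $|\vz'|\sim_{\lambda_1} M$ on $\gz$, integration yields $|\vz(z)-\vz(z_0)|\lesssim_{\lambda_1}Md$ and hence $\diam\vz(Q)\sim_{\lambda_1}Md$.

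To compare $\dist(\vz(Q),\partial\Omega')$ with $\diam\vz(Q)$, I apply the Koebe quarter theorem to $\vz$ on $B(z,\dist(z,\partial\mathbb D))\subset\mathbb D$ and to $\vz^{-1}$ on $B(\vz(z),\dist(\vz(z),\partial\Omega'))\subset\Omega'$ for each $z\in Q$, obtaining the bilateral estimate $\dist(\vz(z),\partial\Omega')\sim|\vz'(z)|\dist(z,\partial\mathbb D)$ with absolute constants. Since $|\vz'(z)|\sim_{\lambda_1}M$ and the Whitney inequalities force $\dist(z,\partial\mathbb D)\sim_{\lambda_1}d$ for every $z\in Q$ (from $d/\lambda_1\le\dist(Q,\partial\mathbb D)\le\dist(z,\partial\mathbb D)\le\dist(Q,\partial\mathbb D)+d\le(\lambda_1+1)d$), taking infimum over $z\in Q$ produces $\dist(\vz(Q),\partial\Omega')\sim_{\lambda_1}Md\sim_{\lambda_1}\diam\vz(Q)$, completing the Whitney verification.

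The hard part will be the upper diameter bound: Lemma~\ref{linear map} only controls $|\vz'|$ at points of bounded hyperbolic distance from $z_0$, so one must select a curve from $z_0$ to $z$ that stays in this controlled region \emph{and} has Euclidean length comparable to $d$. The hyperbolic geodesic accomplishes both tasks, but justifying the Euclidean length bound requires the explicit form of the hyperbolic density of $\mathbb D$. As an alternative one can bypass Koebe's theorem altogether by running the same argument through the capacity machinery of \eqref{condition of lower bound}, \eqref{capacity of balls}, Lemma~\ref{inner capacity} and the conformal invariance of capacity, which is how the three ingredients cited in the paragraph preceding the lemma enter.
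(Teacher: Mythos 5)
Your treatment of the case $\Omega=\mathbb D$ is sound: the hyperbolic-diameter bound on $Q$ (via concavity of $\dist(\cdot,\partial\mathbb D)$ along the segment $[z_0,z]$, combined with $|z-z_0|\le\diam(Q)$ and $\dist(Q,\partial\mathbb D)\gtrsim_{\lambda_1}\diam(Q)$), the application of Lemma~\ref{linear map}, the Koebe-quarter-theorem bounds in both directions for the inner disk and the comparison $\dist(\varphi(z),\partial\Omega')\sim|\varphi'(z)|\dist(z,\partial\mathbb D)$, and the Euclidean-length control of the hyperbolic geodesic all fit together correctly. The paper itself gives no written proof, merely pointing to Lemma~\ref{linear map}, \eqref{condition of lower bound} and \eqref{capacity of balls}; your route through Koebe distortion is different from (and more explicit than) the capacity route the authors seem to have in mind, and it works.

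However, the opening claim that ``the other case follows by applying the result to $\varphi^{-1}\colon\mathbb D\to\Omega$'' is a genuine logical error. If $\Omega'=\mathbb D$ and $\varphi\colon\Omega\to\mathbb D$, then applying the (established) case $\Omega=\mathbb D$ to $\varphi^{-1}\colon\mathbb D\to\Omega$ yields: \emph{if} $P\subset\mathbb D$ is Whitney-type, \emph{then} $\varphi^{-1}(P)\subset\Omega$ is Whitney-type. Setting $P=\varphi(Q)$ gives the implication ``$\varphi(Q)$ Whitney $\Rightarrow$ $Q$ Whitney,'' which is the converse of what the lemma asserts when $\Omega'=\mathbb D$. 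So nothing has been proved in that case. The obstruction is precisely the step you flag as easy: for $Q\subset\Omega$ with $\Omega$ a general simply connected domain, the bound $\dist_h(z,z_0)\lesssim_{\lambda_1}1$ for $z,z_0\in Q$ no longer follows by looking at the straight segment $[z_0,z]$, because that segment may leave $\Omega$ (the concavity of $\dist(\cdot,\partial\Omega)$ along lines, which is special to the disk, fails). To repair it, note that every $z\in Q$ satisfies $\dist(z,\partial\Omega)\ge\diam(Q)/\lambda_1$; cover $Q$ by $N\lesssim\lambda_1^2$ balls of radius $\diam(Q)/(8\lambda_1)$ centred in $Q$, use connectedness of $Q$ to produce a chain of at most $N$ of these balls from $z$ to $z_0$ with consecutive balls overlapping, and observe that each link has quasihyperbolic length $\lesssim 1$ because consecutive centres are at mutual distance $<\dist(\cdot,\partial\Omega)$; since Koebe's theorem gives $\rho_\Omega\sim1/\dist(\cdot,\partial\Omega)$, this yields $\dist_h(z,z_0)\lesssim_{\lambda_1}1$. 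With that in hand your remaining Koebe-distortion argument (applied now to $\varphi^{-1}$ on hyperbolic balls in $\mathbb D$, or directly to $\varphi$ on Euclidean balls in $\Omega$) goes through unchanged. Alternatively, the conformal invariance of capacity together with \eqref{condition of lower bound}, \eqref{capacity of balls} and Lemma~\ref{inner capacity} gives a symmetric argument that treats both cases at once, which is presumably why the paper lists those three ingredients.
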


In the sequel, we often omit the constant $\lambda$ when we are dealing with 
a fixed $\lambda.$ 

Hyperbolic geodesics have the following important property, often
called the Gehring-Hayman inequality. 

\begin{lem}[\cite{GH1962}]\label{GH thm}
Let $\varphi \colon \mathbb D \to \Omega$ be a conformal map. 
Then for any two points $x,\,y\in \mathbb D$, 
denoting the corresponding hyperbolic geodesic in $\Omega$ by 
$\gamma_{x,\,y}$, and by $\omega_{x,\,y}$ any 
Jordan curve connecting $x$ and $y$ in $\mathbb D$, we have
$$\ell(\varphi(\gamma_{x,\,y}))\le C \ell(\varphi(\omega_{x,\,y})),$$
where $C$ is an absolute constant. 
\end{lem}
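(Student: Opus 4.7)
The plan is to follow the classical Whitney--decomposition strategy and reduce the Gehring--Hayman inequality to a local estimate on Whitney pieces of the hyperbolic geodesic. First I would choose consecutive points $x=z_0,z_1,\dots,z_N=y$ on $\gamma_{x,y}$ such that $\dist_h(z_{k-1},z_k)=1$ (with the last arc possibly shorter), and write $\gamma_{x,y}=\bigcup_k \gamma_k$, where $\gamma_k$ is the subarc from $z_{k-1}$ to $z_k$. Since $\gamma_k$ has bounded hyperbolic diameter, it is a $\lambda$-Whitney-type set in $\mathbb D$ for a universal $\lambda$, and Lemma~\ref{linear map} gives $|\varphi'|\sim|\varphi'(z_k)|$ throughout $\gamma_k$. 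Hence
\[
\ell(\varphi(\gamma_{x,y}))=\sum_k \ell(\varphi(\gamma_k))\sim \sum_k |\varphi'(z_k)|(1-|z_k|).
\]

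For each interior $z_k$, the hyperbolic geodesic $\sigma_k\subset\mathbb D$ orthogonal to $\gamma_{x,y}$ at $z_k$ has both endpoints on $\partial\mathbb D$ and splits $\mathbb D$ into two Jordan regions containing $x$ and $y$ respectively, so the Jordan arc $\omega_{x,y}$ must meet $\sigma_k$. Using the first such crossings I decompose $\omega_{x,y}$ into successive subarcs $\omega_k$, where $\omega_k$ connects a point $p_k\in\sigma_k$ to a point $p_{k+1}\in\sigma_{k+1}$. The proof then reduces to the local estimate
\[
\ell(\varphi(\omega_k))\gtrsim |\varphi'(z_k)|(1-|z_k|),
\]
summation over $k$ yielding the claim.

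To prove the local estimate, I would introduce the Whitney disk $B_k:=B(z_k,\,c(1-|z_k|))\subset\mathbb D$ with a small universal $c$, whose image $\varphi(B_k)\subset\Omega$ is Whitney-type by Lemma~\ref{whitney preserving} with $\diam \varphi(B_k)\sim|\varphi'(z_k)|(1-|z_k|)$. In the easy case $\omega_k\subset B_k$, Lemma~\ref{linear map} applied on $B_k$ yields $\ell(\varphi(\omega_k))\gtrsim |\varphi'(z_k)|(1-|z_k|)$ directly. In the opposite case $\omega_k$ leaves $B_k$, and I would apply the conformal capacity argument: comparing $\omega_k$ (a continuum with endpoints on $\sigma_k,\sigma_{k+1}$) with a continuum transverse to it inside a suitable nested annulus around $\gamma_k$, combined with \eqref{condition of lower bound}--\eqref{capacity of balls}, produces a lower bound on $\mathrm{Cap}(\varphi(\omega_k),\varphi(\gamma_k),\Omega)$. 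Lemma~\ref{inner capacity} then converts this capacity bound into a lower bound on $\diam_\Omega(\varphi(\omega_k))$, and hence on $\ell(\varphi(\omega_k))$, comparable to $\diam\varphi(B_k)$.

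The main obstacle is precisely this detour case: a priori $\omega_k$ may wander into a region where $|\varphi'|$ is much smaller than $|\varphi'(z_k)|$, and one cannot simply estimate length pointwise. The conformal capacity argument is the crucial tool here, as it translates a combinatorial/topological separation property of $\omega_k$ in $\mathbb D$ into a quantitative inner-distance separation of $\varphi(\omega_k)$ in $\Omega$, which is preserved under conformal change of coordinates. Once the local estimate is secured, summing over the Whitney pieces $\gamma_k$ finishes the proof with an absolute constant, as required.
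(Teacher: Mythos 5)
The paper does not prove this lemma; it is quoted verbatim from Gehring--Hayman \cite{GH1962}, so there is no in-paper argument to compare against. Judged on its own terms, your blind attempt starts along the classical lines (Whitney decomposition of the geodesic, so that $\ell(\varphi(\gamma_{x,y}))\sim\sum_k|\varphi'(z_k)|(1-|z_k|)$, and a decomposition of $\omega_{x,y}$ via crossings of the perpendicular geodesics $\sigma_k$), but the crucial ``detour'' case is not handled, and the proposed capacity argument does not close the gap.

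Concretely, you want the local estimate $\ell(\varphi(\omega_k))\gtrsim|\varphi'(z_k)|(1-|z_k|)$ and propose to get it from a lower bound on ${\rm Cap}(\varphi(\omega_k),\varphi(\gamma_k),\Omega)$ together with Lemma~\ref{inner capacity}. Both steps are problematic. First, by conformal invariance this capacity equals ${\rm Cap}(\omega_k,\gamma_k,\mathbb D)$, and there is no reason this is bounded away from zero: the endpoints $p_k,p_{k+1}$ need only lie \emph{somewhere} on $\sigma_k,\sigma_{k+1}$, so $\omega_k$ may lie entirely far from $\gamma_k$ in the hyperbolic metric (e.g.\ very near a distant portion of $\partial\mathbb D$), and then $\dist(\omega_k,\gamma_k)$ can be much larger than $\diam\gamma_k\sim 1-|z_k|$, so the hypothesis of \eqref{condition of lower bound} fails. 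Second, even when a capacity lower bound \emph{is} available, Lemma~\ref{inner capacity} only yields $\min\{\diam_\Omega\varphi(\omega_k),\diam_\Omega\varphi(\gamma_k)\}\gtrsim\dist_\Omega(\varphi(\omega_k),\varphi(\gamma_k))$, and the right-hand side can be arbitrarily small (indeed zero if $\omega_k$ meets $\gamma_k$), so this is not a lower bound of the required form $|\varphi'(z_k)|(1-|z_k|)$. In short, the tools \eqref{condition of lower bound} and Lemma~\ref{inner capacity} are designed to pass from capacity to \emph{relative} inner-distance information and cannot, by themselves, produce the absolute length lower bound you need in the detour regime. The genuine content of the Gehring--Hayman theorem is precisely that a detour cannot be short even though $|\varphi'|$ is uncontrolled away from the geodesic; the classical proofs handle this with a length--area (extremal length) or harmonic-measure estimate tied to the ``shadow'' of the Whitney piece $\gamma_k$ on $\partial\mathbb D$, which is an ingredient your sketch does not supply. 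Until such an argument is inserted, the local estimate, and hence the whole proof, has a real gap.
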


Finally, let us recall that bounded smooth functions are 
dense in $W^{1,\,p}(\Omega)$. 

\begin{lem}\label{bounded}
For any $1\le p<\infty$, it holds that 
$L^{\infty}(\Omega)\cap C^{\infty}(\Omega)$ 
is dense in $W^{1,\,p}(\Omega)$ for any domain $\Omega\subset \mathbb R^2$. 
\end{lem}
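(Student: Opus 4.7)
The plan is to combine the classical density of smooth functions (already quoted in the introduction: \textquotedblleft smooth functions are dense in $W^{1,p}(\Omega)$ for any domain $\Omega\subset \mathbb R^2$\textquotedblright) with a two-step truncation argument. Given $u\in W^{1,p}(\Omega)$, first define the Lipschitz truncations $u_k:=\max\{-k,\min\{k,u\}\}$. These belong to $W^{1,p}(\Omega)\cap L^\infty(\Omega)$ with $\nabla u_k=\chi_{\{|u|<k\}}\nabla u$ almost everywhere; since $|u_k|\le|u|$, $|\nabla u_k|\le|\nabla u|$, $u_k\to u$ and $\nabla u_k\to\nabla u$ a.e., dominated convergence yields $u_k\to u$ in $W^{1,p}(\Omega)$. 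Hence it suffices to approximate each bounded Sobolev function $u_k$ by functions in $C^\infty(\Omega)\cap L^\infty(\Omega)$.

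For the second step, fix $k$ and pick $\widetilde v_n\in C^\infty(\Omega)\cap W^{1,p}(\Omega)$ with $\widetilde v_n\to u_k$ in $W^{1,p}(\Omega)$; these approximants need not be bounded. Choose once and for all a smooth function $\eta_k\colon\mathbb R\to\mathbb R$ with $\eta_k(t)=t$ for $|t|\le k$, $\|\eta_k\|_\infty\le k+1$, and $\|\eta_k'\|_\infty\le 1$, and set $v_n:=\eta_k\circ\widetilde v_n$. Then $v_n\in C^\infty(\Omega)\cap L^\infty(\Omega)$, and since $|u_k|\le k$ pointwise we have $\eta_k\circ u_k=u_k$. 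The Lipschitzness of $\eta_k$ gives $\|v_n-u_k\|_{L^p(\Omega)}\le\|\widetilde v_n-u_k\|_{L^p(\Omega)}\to 0$.

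For the gradients, the chain rule for compositions with smooth maps yields
\[
\nabla v_n-\nabla u_k=\eta_k'(\widetilde v_n)\bigl(\nabla\widetilde v_n-\nabla u_k\bigr)+\bigl(\eta_k'(\widetilde v_n)-1\bigr)\nabla u_k,
\]
where we used that $\eta_k'\equiv 1$ on $(-k,k)$ and that $\nabla u_k$ vanishes wherever $|u_k|=k$. The first term has $L^p$-norm bounded by $\|\nabla\widetilde v_n-\nabla u_k\|_{L^p}\to 0$. For the second, pass to a subsequence so that $\widetilde v_n\to u_k$ a.e.; then $\eta_k'(\widetilde v_n)\to 1$ a.e.\ on $\{\nabla u_k\ne 0\}\subset\{|u_k|<k\}$, and dominated convergence (using $|\eta_k'|\le 1$ and $\nabla u_k\in L^p$) finishes the job. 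A diagonal extraction across the parameters $k$ and $n$ then produces the desired sequence in $C^\infty(\Omega)\cap L^\infty(\Omega)$ converging to $u$ in $W^{1,p}(\Omega)$.

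The only genuinely nontrivial point is the passage to a subsequence yielding a.e.\ convergence of $\widetilde v_n$, needed to apply dominated convergence in the second piece; everything else is a soft wrap around Meyers--Serrin and a single smooth cut-off.
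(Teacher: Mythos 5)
Your proof is correct. It shares the paper's opening move — the Lipschitz truncation $u_k=\max\{-k,\min\{k,u\}\}$, with convergence $u_k\to u$ in $W^{1,p}$ by dominated convergence — but then diverges in how boundedness of the smooth approximants is secured. The paper simply applies the standard Meyers--Serrin partition-of-unity-and-mollification construction to $u_k$ and relies, implicitly, on the fact that this construction applied to an $L^\infty$ function produces bounded smooth functions (each mollified piece has sup controlled by $\|u_k\|_\infty$ and the sum is locally finite with bounded overlap). You instead treat the density of $C^\infty(\Omega)\cap W^{1,p}(\Omega)$ as a black box that may return unbounded approximants, and recover boundedness a posteriori by post-composing with a smooth range-cutoff $\eta_k$. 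Your chain-rule decomposition of $\nabla v_n-\nabla u_k$ and the observation that $\eta_k'(u_k)=1$ a.e.\ on $\{\nabla u_k\neq0\}\subset\{|u_k|<k\}$ are exactly what is needed, and the a.e.\ subsequence plus dominated convergence close the argument. The trade-off: the paper's version is shorter but leans on an unstated property of the mollification scheme; yours is a bit longer but more modular, using only the soft qualitative density statement. One small remark: for $\eta_k$ to be $C^\infty$ while equal to the identity on $[-k,k]$, the derivative must flatten out with all higher derivatives vanishing at $\pm k$, which is achievable with a standard bump-function glue — worth a word to head off the objection that a piecewise-defined $\eta_k$ is merely $C^{1,1}$.
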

\begin{proof}
Fix $v\in W^{1,\,p}(\Omega)$. 
Let $$v_m=\max\{\min\{v(x),\,m\},\,-m\}.$$ One can easily check by
the absolute continuity of integral that this
sequence converges to $v$ in the Sobolev norm.  
The claim follows by a standard partition of unity and mollification 
procedure applied to the functions $v_m.$
 
\end{proof}

\section{Decomposition and partition of unity}\label{Decomposition and partition of unity}
\begin{figure}
 \centering
 \def\svgwidth{300pt}
 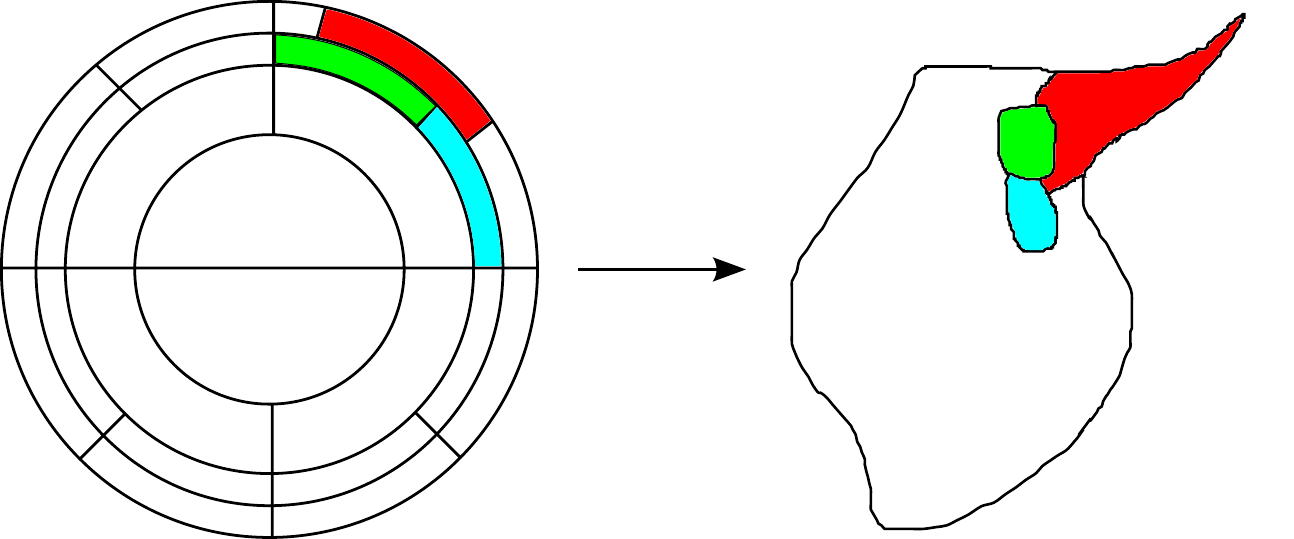
 \caption{The sets $Q_{2,\,0},\,Q_{2,\,1}$ and the corresponding images under $\varphi$, namely $R_0,\,R_1$ and the set $S_0$.}
 \label{fig:decomposition}
\end{figure}
\subsection{Decomposition of the core of $\Omega$}
Fix a bounded simply connected domain $\Omega\subset\mathbb R^2$, and 
consider a conformal map $\varphi\colon \mathbb D \to \Omega$.
For $l\ge 1$, let
$$A_l=\overline{B}(0,\,1-2^{-l-1})\setminus B(0,\,1-2^{-l}). $$
We define the radial ray $r_{\theta}$ as the line segment between the origin and the point $e^{i\theta}$. 
For each $l\in \mathbb N$, $0 \le j \le 2^{l+1}-1$ and 
$\theta_{l,\,j}= j 2^{-l}\pi$,  the collection of radial rays 
$r_{\theta_{l,\,j}}$ cut $A_l$ into $2^{l+1}$ sets $Q_{l,\,j}$ labeled counterclockwise respect to $j$ starting from the positive real axis. Moreover, define
$A_0=B(0,\,\frac 1 2)$, $Q_{0,\,0}=\{(x_1,\,x_2)\in A_0 \mid x_2\ge 0 \},$
and let $Q_{0,\,1}=\{(x_1,\,x_2)\in A_0 \mid x_2\le 0 \}. $
By abuse of notation, we sometimes refer also to the closures of the sets $Q_{l,\,j}$ 
by $Q_{l,\,j}.$ 
Notice that all these sets are of Whitney-type.

For $m\ge 2$, set 
$$\Omega_m=\varphi(\overline{B}(0,\,1-2^{-m-1})), $$
and 
$$D_m=\varphi(A_m)\subset \Omega_m.$$
For $0\le j\le 2^{m+1}-1$, by Lemma~\ref{whitney preserving}, the induced set $R_{j}=\varphi( Q_{m,\,j})\subset \Omega$ is also a Whitney type set for $\Omega$. These sets form a decomposition of $D_m$. Apparently the set $R_j$ depends on $m$, but for notational convenience we suppress this. 

\subsection{Decomposition of the boundary layer of $\Omega$}
Now let us decompose $J_m=\Omega\setminus\Omega_m$. Our aim is to decompose $J_m$ into connected sets such that, for each of them the length of its boundary inside $\Omega$ is controlled, and the distance between any two sets is relatively far if they have no intersection. 

To be specific, for each $0\le j\le 2^{m+1}-1$, define $\beta_{j}$ to be the shorter arc of
$$S^1(0,\,1-2^{-m-1})\setminus (r_{ (2j+1) 2^{-m-1}\pi}\cup r_{ (j+1) 2^{-m}\pi}),$$ 
so that
\begin{equation}\label{condition 1}
 2^{-m-2}\pi\le \dist(\beta_j,\,\beta_{j+1})\le 2^{-m+1}\pi.
\end{equation}

We claim that there exists a hyperbolic geodesic $\gamma^n_{j}$ connecting $\varphi(\beta_{j})\subset \partial R_j$ and $\varphi(\delta^n_{j})$ such that $\ell(\gamma^n_{j})\lesssim \diam(R_{j})$, where each $\delta^n_{j}$ is the shorter arc of
$$S^1(0,\,1-2^{-m-n})\setminus (r_{ (2j+1) 2^{-m-1}\pi}\cup r_{ (j+1) 2^{-m}\pi})$$ 
for $n\ge 3$. Observe that $\diam(\varphi(\beta_{j}))\sim \diam(R_{j})\sim
\diam_{\Omega}(R_j)$ by 
Lemma~\ref{linear map}.

Notice that ${\rm Cap}(\beta_{j},\,\delta^n_{j},\,\mathbb D)$ is bounded away 
from zero by an absolute constant according to 
\eqref{condition of lower bound}, and hence 
$${\rm Cap}(\varphi(\beta_{j}),\,\varphi(\delta^n_{j}),\,\Omega)\gtrsim 1. $$ 
By Lemma~\ref{inner capacity}, we conclude that 
$\dist_{\Omega}(\varphi(\beta_{j}),\,\varphi(\delta^n_{j}))\lesssim \diam_{\Omega}(R_{j})$. The existence of a suitable $\gamma_j^n$ follows by Lemma~\ref{GH thm}. 

Parameterize each $\gamma^n_j$ by arclength. 
Notice that the lengths of $\gamma^n_j$ are uniformly bounded from above by a multiplicative
constant times $\diam(R_j)$.
 Letting $n\to \infty$, by Arzel\'a-Ascoli lemma, we obtain a curve 
$\gamma_j$ connecting $R_j$ and the boundary $\partial \Omega$ with 
$\ell(\gamma_j) \lesssim \diam(R_{j}).$ 

Moreover by Lemma~\ref{linear map}, for any $0\le j\le 2^{m+1}-1$, $\diam_{\Omega}(R_j)\sim\diam_{\Omega}(R_{j+1}),$ where 
we define $R_{2^{m+1}}=R_0$. 
Thus by the triangle inequality and Lemma~\ref{linear map}
\begin{equation}\label{oneside}
\dist_{\Omega}(\gamma^n_j,\,\gamma^n_{j+1})\lesssim \diam(R_j), 
\end{equation}

Addtionally, we claim that 
\begin{equation}\label{anotherside}
\dist_{\Omega}(\gamma^n_j,\,\gamma^n_{j+1})\gtrsim \diam(R_j). 
\end{equation}
Indeed, first of all, by construction and \eqref{condition 1} we know that 
$\gamma^n_j \cap \gamma^n_{j+1} =\emptyset$.  Consider a curve 
$\az\subset \Omega$ of length at most 
$2\dist_{\Omega}(\gamma^n_j,\,\gamma^n_{j+1})$, joining 
$\gamma^n_j,\,\gamma^n_{j+1}$ in $\Omega.$

If $16\dist(\varphi^{-1}(\az),\, Q_{m,\,j})\le \diam(Q_{m,\,j})$, then there is 
a subarc $\az'\subset \az$ such that 
$\ell(\varphi^{-1}(\az'))\sim \diam(Q_{m,\,j})$ and 
$\dist(\varphi^{-1}(\az'),\, Q_{m,\,j})\le \frac 1 8\diam(Q_{m,\,j})$. 
Then by Lemma~\ref{linear map} and \eqref{condition 1} one concludes that 
$$\diam(R_j) \lesssim\diam(\az')\le \ell(\az')\le \ell(\az)\sim \dist_{\Omega}(\gamma^n_j,\,\gamma^n_{j+1}). $$

For the other case where $16\dist(\varphi^{-1}(\az),\, Q_{m,\,j})\ge \diam(Q_{m,\,j})$, observe that 
$${\rm Cap}(\varphi^{-1}(\az),\, Q_{m,\,j},\,\Omega)\gtrsim 1,$$ 
and by Lemma~\ref{linear map} $\dist(\az,\,R_j)\gtrsim \diam(R_j)$. Hence by 
Lemma~\ref{inner capacity} we conclude that
$$\diam(R_j) \lesssim \diam(\az)\le \ell(\az )\sim \dist_{\Omega}(\gamma^n_j,\,\gamma^n_{j+1}).$$
Consequently we obtain the claim. Combining \eqref{oneside} and \eqref{anotherside} results in
\begin{equation*}
\dist_{\Omega}(\gamma^n_j,\,\gamma^n_{j+1})\sim \diam(R_j), 
\end{equation*}
and finally in
\begin{equation}\label{curve separation}
\dist_{\Omega}(\gamma_j,\,\gamma_{j+1})\sim \diam(R_j), 
\end{equation}
by letting $n\to \infty$. 

Denote by $S_j$ the relatively closed subset of
$\Omega$ enclosed by $\partial \Omega$, $\partial \Omega_m$, $\gamma_j$ and 
$\gamma_{j+1}$. Then $J_m=\cup_j S_j$ and $|S_i\cap S_j|= 0$ for any $i\neq j$, where $|A|$ refers to the Lebesgue measure of a set $A$.  Thus the sets $S_j$, modulo sets of measure zero, give us a decomposition of $J_m$. 

Furthermore, based on \eqref{curve separation}, we claim that 
\begin{equation}\label{set separation}
\dist_{\Omega}(S_i,\,S_j)\gtrsim \max\{\diam(R_i),\,\diam(R_j)\} \text{ if } R_i\cap R_j=\emptyset, 
\end{equation}
with a constant independent of $\Omega$ and $m$. 
Indeed any curve $\gamma\subset \Omega$ joining $S_i$ and $S_j$ must pass 
through  the neighbors of $S_i$, namely $R_i\cup R_{i+1}\cup R_{i+2}\cup S_{i-1}\cup S_{i+1}$. A similar conclusion holds also for $S_j$ and its neighbors. Then the desired claim is given by \eqref{curve separation}, the definition of
the sets $R_i,\, R_{i-1}$ and $R_{i+1}$ and  Lemma~\ref{linear map}.

\subsection{A partition of unity associated to the decomposition}

Next we construct a partition of unity related to the decomposition above. 
Recall that
$$\Omega=\Omega_m\cup J_m,$$ $\Omega_m=\Omega_{m-1}\cup D_m,$
and for $D_m\subset \Omega_m$ and $J_m$, we have 
$D_m=\cup_j R_j$ and $J_m=\cup_j S_j$ respectively. 

For $\Omega_m$, we define a Lipschitz function $\psi$ in $\Omega$ such that 
$\psi$ is compactly supported in $\Omega_m$, $0\le \psi \le1$, $\psi (x)=1$ 
if $x\in \Omega_{m-1}$, and $|\nabla \psi(x)|\lesssim (\diam (R_j))^{-1}$ 
if $x\in R_j$, with a constant independent of $m,j.$
This function can be given via the distance function by letting 
$$\psi(x)=\min\left\{1, \frac  {c_1\dist(x,\,J_m)} {\dist(x,\,\partial
\Omega)}\right\},$$   
where the value of $c_1$ will be fixed momentarily.

Indeed, $\psi$ is Lipschitz and,
by Leibniz's rule, for $x\in R_j$
$$|\nabla \psi|\lesssim \frac 1 {\dist(x,\,\partial \Omega)}+ 
\frac {\dist(x,\,J_m)}  {(\dist(x,\,\partial \Omega))^2}\lesssim \frac 1 {\diam(R_j)},$$
where we applied the fact that $\diam(R_j)\sim\dist(R_j,\,\partial \Omega)$ 
for any $j$. Since $\psi$ vanishes in $J_m,$ we are left to obtain the 
correct boundary value on $\Omega_{m-1}.$ For this, notice that each
$x\in \partial \Omega_{m-1}$ belongs to $R_j$ for some $j.$ Since
$R_j$ is a Whitney-type set, 
$$\dist(x,\partial \Omega) \lesssim \diam (R_j).$$
On the other hand, Lemma~\ref{linear map} guarantees that
$$\diam(R_j)\lesssim \dist(x,J_m).$$
Hence there is a constant $C_1,$ independent of $m,j$ so that
$\psi=1$ on $\Omega_{m-1}$ provided $c_1\ge C_1.$

For each $S_j$, we choose a locally Lipschitz continuous function $\phi_j$ 
defined in $\Omega$ such that the support of $\phi_j$ is relatively closed in 
$\Omega$ and contained in $c_2 S_j$, $0\le \phi_j\le1$, $\phi_j(x)=1$ if 
$x\in S_j$, and $|\nabla \phi_j|\lesssim (\diam(R_j))^{-1}$. 
Here the set $c_2 S_j$ is defined as
$$c_2 S_j=\left\{x\in\Omega \mid \dist_{\Omega}(x,\,S_j)\le  {(c_2-1)\diam(R_j)}\right\}, $$
for some constant $c_2>1$ to be determined later.
Indeed, we can simply set 
$$\phi_{j}(x)=\max\{1-2 [(c_2-1)\diam(R_j)]^{-1} \dist_{\boz}(x,\,  S_j),\, 0\}$$
for $x\in \Omega.$

Let us now choose $c_2$ small enough, so that \eqref{set separation} and 
Lemma~\ref{whitney preserving} guarantee that
\begin{equation}\label{support separation}
c_2S_i\cap c_2S_j=\emptyset \ \ \text{ if} \ \ S_i\cap S_j=\emptyset,  \quad c_2S_i\cap R_j=\emptyset \ \ \text{ if} \ 
\ R_{i+1}\cap R_j=\emptyset 
\end{equation}
and
\begin{equation}\label{support separation2}
c_2S_i\cap \Omega_{m-1}=\emptyset
\end{equation}
for each $i.$

Towards obtaining a partition of unity, we wish now to choose $c_1$ large enough
so that
$\psi(x)+\phi_j(x)\gtrsim 1$ for each $x\in R_j.$ Notice that Lemma~\ref{linear map} gives us a constant $C_2$ and the fact that $R_j$ is a Whitney type
set a constant $C_3$ so that
$$\dist(x,\,S_{j-1}\cup S_j)\le C_2\dist(x,\,J_m)$$ 
and
$$\dist(x,\,\partial\Omega)\le C_3\diam(R_j)$$
when $x\in R_j.$ Choosing 
$$c_1=2\max\left\{C_1,\frac {C_2C_3}{c_2-1}\right\}$$
does the job; then $\phi_{j-1}(x)+\phi_j(x)\ge \frac 1 4$ if $x\in R_j$ and 
$\psi(x)\le \frac 1 2.$

We conclude that $\Phi(x):=\psi(x)+\sum_j \phi_j(x)\ge 1/4$ for each 
$x\in \Omega$ and
hence 
we obtain the desired partition of unity by dividing $\varphi$ and each 
$\phi_j$ by $\Phi$ in $\Omega.$ By our construction,
the new functions that
we still denote $\psi$ and $\phi_i$ for convenience satisfy the
same gradient bounds as the original ones, up to a multiplicative constant.

\section{Proof of Theorem~\ref{mainthm 1} and Corollary~\ref{global density}}

The proof of Theorem~\ref{mainthm 1} is based on approximating our given
function $u$ via a weighted sum of the functions in the partition of unity
from the previous section. Towards this end, for $m$ to be fixed later and
the associated indices $j,$
define 
$$a_j=\bint_{\varphi^{-1}(R_j)} u\circ \varphi\, dx.$$
Then $a_j$ is the average of $u\circ \varphi$ over $Q_{m,j}=\varphi^{-1}(R_j).$
Recall here our notation from Section 2.

We need the following technical result. 

\begin{lem}\label{Poincare inequality}
For $u\in W^{1,\,p}(\Omega)$ and $R_i,\, R_{i+1}\subset \Omega$ defined in 
Section~\ref{Decomposition and partition of unity}, we have
$$|a_j-a_{j+1}|^p\lesssim (\diam(R_j))^{p-2}\int_{R_j\cup R_{j+1}} |\nabla u|^p\, dx$$
and
$$\int_{R_j}|u-a_j|^p\, dx\lesssim (\diam(R_j))^{p}\int_{R_j} |\nabla u|^p\, dx,$$
where the constant only depends on $p$. 
\end{lem}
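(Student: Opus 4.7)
The plan is to pull everything back to the disk via $\varphi$, apply a standard Poincar\'e inequality there on the nicely shaped set $Q_{m,j}$ (respectively on $Q_{m,j}\cup Q_{m,j+1}$), and then push forward to $\Omega$ using the fact that $\varphi$ is conformal, so that the Jacobian is $|\varphi'|^{2}$ and moreover $|\varphi'|$ is essentially constant on each Whitney-type set by Lemma~\ref{linear map}.

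More concretely, I would begin by noting that each $Q_{m,j}$ (and each union $Q_{m,j}\cup Q_{m,j+1}$) is, after rescaling by $2^{m}$ and rotating, one of finitely many fixed bi-Lipschitz images of a square. In particular, the classical Poincar\'e inequality holds with an \emph{absolute} constant: for any $f\in W^{1,p}$,
\begin{equation*}
\int_{Q_{m,j}}|f-f_{Q_{m,j}}|^{p}\,dx\lesssim (\diam Q_{m,j})^{p}\int_{Q_{m,j}}|\nabla f|^{p}\,dx,
\end{equation*}
and similarly on $Q_{m,j}\cup Q_{m,j+1}$. Applying this to $f=u\circ\varphi$ and using Jensen's inequality on the averages $a_{j},a_{j+1}$ over $Q_{m,j},Q_{m,j+1}$ together with the common average on $Q_{m,j}\cup Q_{m,j+1}$, I obtain
\begin{equation*}
|a_{j}-a_{j+1}|^{p}\lesssim (\diam Q_{m,j})^{p-2}\int_{Q_{m,j}\cup Q_{m,j+1}}|\nabla(u\circ\varphi)|^{p}\,dx,
\end{equation*}
where the factor $(\diam Q_{m,j})^{-2}$ comes from dividing by $|Q_{m,j}\cup Q_{m,j+1}|$. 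Similarly, Poincar\'e on $Q_{m,j}$ gives the analogue of the second inequality in the disk.

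Next I would carry out the change of variables $y=\varphi(x)$. Since $\varphi$ is conformal, $dy=|\varphi'(x)|^{2}\,dx$, and by the chain rule $|\nabla(u\circ\varphi)(x)|=|\nabla u(\varphi(x))|\,|\varphi'(x)|$. Lemma~\ref{whitney preserving} shows that $R_{j}$ and $R_{j}\cup R_{j+1}$ remain Whitney-type, and by Lemma~\ref{linear map} the derivative $|\varphi'|$ is comparable to a single constant $c_{j}$ on the pre-image of these sets; moreover $c_{j}\sim\diam(R_{j})/\diam(Q_{m,j})$. Substituting,
\begin{equation*}
\int_{Q_{m,j}\cup Q_{m,j+1}}|\nabla(u\circ\varphi)|^{p}\,dx\sim c_{j}^{p-2}\int_{R_{j}\cup R_{j+1}}|\nabla u|^{p}\,dy,
\end{equation*}
and after combining with the Poincar\'e step,
\begin{equation*}
(\diam Q_{m,j})^{p-2}\,c_{j}^{p-2}\sim (\diam Q_{m,j}\cdot c_{j})^{p-2}\sim (\diam R_{j})^{p-2},
\end{equation*}
which gives the first claimed inequality. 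The second inequality is obtained by the same change of variables applied to $\int_{Q_{m,j}}|u\circ\varphi-a_{j}|^{p}\,dx$, now picking up a factor $c_{j}^{-2}$ on the left and $c_{j}^{p}$ on the right, yielding $(\diam Q_{m,j})^{p}c_{j}^{p}\sim(\diam R_{j})^{p}$ overall.

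The only step that requires any care is keeping track of the exponents of $|\varphi'|$: one factor of $|\varphi'|^{2}$ from the Jacobian, $|\varphi'|^{p}$ from the chain rule, and combining them with the appropriate power of $\diam Q_{m,j}$ so that everything collapses into the desired power of $\diam R_{j}$. The essentially-constant behavior of $|\varphi'|$ on Whitney-type sets, guaranteed by Lemma~\ref{linear map}, is what allows this bookkeeping to go through with an absolute implicit constant independent of $m$ and $j$.
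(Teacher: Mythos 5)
Your proposal is correct and follows essentially the same route as the paper: pull back to the disk, apply the classical Poincar\'e inequality on $Q_{m,j}$ (and on $Q_{m,j}\cup Q_{m,j+1}$ together with adding and subtracting the common average), and then change variables using the conformality of $\varphi$ and the fact that $|\varphi'|$ is essentially constant on Whitney-type sets by Lemma~\ref{linear map}. The paper states the exponent bookkeeping more tersely, but the underlying argument and the key lemmas invoked are identical.
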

\begin{proof}
First of all by Lemma~\ref{linear map}, we know that 
$u\circ \varphi\in W^{1,\,p}_{\loc}(\mathbb D)$. We apply the usual 
Poincar\'e inequality on the nice domain $Q_{m,j}=\varphi^{-1}(R_j)$
to get
$$\int_{Q_{m,j}}|u\circ \varphi-a_j|^p\, dx\lesssim \diam(Q_{m,j})^p
\int_{Q_{m,j}}|\nabla (u\circ \varphi)|^p\, dx.$$
Notice that $J_{\varphi}(z)=|\varphi'(z)|^2$ by conformality of $\varphi.$
Hence our second estimate follows via a change of variable by using 
chain rule and 
Lemma~\ref{linear map}, according to which $\varphi'$ is essentially constant
on $Q_{m,i}.$ 

The first inequality follows analogously, using now the Poincar\'e inequality
over $Q_{m,j}\cup Q_{m,j+1}$ and by adding and subtracting the average over
$Q_{m,j}\cup Q_{m,j+1}.$
\end{proof}

\begin{proof}[Proof of Theorem~\ref{mainthm 1}]
Fix $\ez>0$. Also fix $u\in W^{1,\,p}(\Omega)$ for given $1\le p<\infty$. 
We may assume that $u$ is smooth and bounded because of Lemma~\ref{bounded}. 
We may also require that $\|u\|_{L^{\infty}(\Omega)}=1$. 

For $m\in\mathbb N$ large enough
\begin{equation}\label{requirement}
\|u\|^p_{W^{1,\,p}(J_m\cup D_m)}\le \ez \ \text{ and  }\  |J_m\cup D_m|\le \ez. 
\end{equation}
Notice that $u|_{\Omega_m}\in W^{1,\,\infty}(\Omega_m)$ since $\Omega_m$ is compact and $u$ is smooth. 

We define a function $u_m$ on $\Omega$ by setting
$$u_m(x)=u(x)\psi(x)+\sum_{j} a_j \phi_j(x),$$
where $\psi(x)$ and $\phi_j(x)$ are the corresponding functions in the 
partition of unity from the previous section
and $a_j$ is as in the beginning of this section.

It is obvious that $u_m\in W^{1,\,\infty}(\Omega)$ is locally Lipschitz by our 
construction, since we only have finitely many $R_j$ and the definition of our 
partition of unity gives the right estimates on the derivatives of of the
functions in our partition of unity. 
Moreover we have $\|u_m\|_{L^{\infty}(\Omega)}\le 1$ since $\|u\|_{L^{\infty}(\Omega)}=1$, and hence $$\|u_m\|_{L^{p}(J_m\cup D_m)}\le \ez.$$
Consequently, since $c_2S_j\cap \Omega_{m-1}=\emptyset$ for any $j$, we only 
need to check that
\begin{equation*}\label{aim for k}
\int_{J_m\cup D_m}|\nabla u_m|^p\, dx \lesssim \ez.
\end{equation*}
This actually follows via the Poincar\'e inequality, 
Lemma~\ref{Poincare inequality}.

Indeed for any $R_i\subset D_m$ with the associated constant $a_i$, Lemma~\ref{Poincare inequality} and \eqref{support separation} give
\begin{align*}
&\int_{R_i} |\nabla u_m|^p \, dx \lesssim \int_{R_i} |\nabla (u_m-a_i)|^p \, dx \\
\lesssim& \int_{R_i} |\nabla [ (u(x)-a_i)\psi(x)]|^p \, dx +\sum_{\substack{S_j\subset J_m\\ c_2S_j \cap R_i \neq\emptyset}} \int_{R_i} |\nabla [(a_j-a_i) \phi_j(x) ]|^p \, dx \\
\lesssim& \int_{R_i} |\nabla u|^p + |u(x)-a_i|^p \diam(R_i)^{-p}\, dx +
\sum_{\substack{S_j\subset J_m\\ c_2S_j \cap R_i \neq\emptyset}} 
|a_i-a_j|^p \diam(R_i)^{2- p} \\
\lesssim& \int_{R_i} |\nabla u|^p \, dx+ \sum_{\substack{S_j\subset J_m\\ c_2 S_j \cap R_i \neq\emptyset}}  \int_{R_i\cup R_j}|\nabla u|^p \,dx , 
\end{align*}
where $R_j$ and $R_i$ are the corresponding Whitney-type sets contained in $D_m$ for $S_j$ and $S_i,$ respectively. 

Next for each $S_i$, by letting its associated constant to be $a_i$, by Lemma~\ref{Poincare inequality}, \eqref{support separation} and the definition
of $\phi_j,$ we get
\begin{align*}
&  \int_{S_i} |\nabla u_m|^p \, dx \lesssim  \int_{S_i} |\nabla (u_m-a_i)|^p\, dx \\
& \lesssim \sum_{\substack{S_j\subset J_m\\ c_2S_i \cap c_2S_j \neq\emptyset}} \int_{S_i} |\nabla [ (a_i-a_j) \phi_j(x) ]|^p \, dx \lesssim \sum_{\substack{S_j\subset J_m\\ c_2S_i \cap c_2S_j \neq\emptyset}}  |a_j-a_i|^p \diam(R_i)^{2-p} \\
&\lesssim \sum_{\substack{S_j\subset J_m\\ c_2S_i \cap c_2S_j \neq\emptyset}} \int_{R_i\cup R_j} |\nabla u|^p  \, dx, 
\end{align*}
where $R_j$ and $R_i$ are still the corresponding Whitney-type sets contained in $D_m$ for $S_j$ and $S_i,$ respectively. 

Since all the sets $R_j$ and $c_2S_j$ have uniformly finitely 
many overlaps, the desired estimate follows by summing over $i.$
\end{proof}

Let $X$ and $Y$ be two non-empty subsets of $\mathbb R^n$. Define the {\it Hausdorff distance} $\dist_{\mathrm H}(X,\, Y)$ between them as
$$\dist_{\mathrm H}(X\,,Y) = \max\{\,\sup_{x \in X} \inf_{y \in Y} d(x,\,y),\, \sup_{y \in Y} \inf_{x \in X} d(x,\,y)\}. $$
We are ready to prove Corollary~\ref{global density}. 

\begin{proof}[Proof of Corollary~\ref{global density}]
For a given Jordan domain $\Omega\subset \mathbb R^2$ we can construct a 
sequence 
of Lipschitz domains $\{G_s\}_{s=1}^{\infty}$ approaching it in Hausdorff 
distance such that $\Omega\subset \subset G_{s+1}\subset \subset G_{s}$ for 
each $s\in\mathbb N$. For example, define $G_s$ by subtracting from 
$\mathbb R^2$ all the closed Whitney squares of the complementary domain of 
$\Omega$ whose sidelength is larger than $2^{-s}$.

Let us recall the proof of Theorem~\ref{mainthm 1}. For a function 
$u\in L^{\infty}(\Omega)\cap C^{\infty}(\Omega)\cap W^{1,\,p}(\Omega)$, we first 
restricted it on $\Omega_m$ so that \eqref{requirement} is satisfied, where 
the corresponding sets $J_m$ and $D_m$ are defined in 
Section~\ref{Decomposition and partition of unity}. Then we extended the 
restricted function $u_m$ to each set $S_j$ as the integral average of $u$ on 
the corresponding set $R_j$. Next we "glued" these pieces together by our
partition of unity, such that the non-zero gradient of $u-u_m$ can only 
appear in the neighborhoods (with respect to the topology of $\Omega$) of 
$\partial \Omega_m$  and of the curves $\gamma_j$. We remind that 
\eqref{set separation} was crucial here.

Now let us return to  Corollary~\ref{global density}. 
Fix $u\in C^{\infty}(\Omega)\cap W^{1,\,p}(\Omega)\cap L^{\infty}(\Omega)$ 
and $\ez>0$. When $m$ is large enough, we still truncate $u$ on $\Omega_m$ so 
that \eqref{requirement} holds.

First observe that, when $s$ is large enough, a Whitney-type set contained in 
$\Omega_m$ is still a Whitney-type set in $G_s$ up to a multiplicative 
constant, as the domains $G_s$ converge to $\Omega$ in Hausdorff distance. 
Especially, all the sets $R_j$ are still of Whitney-type.

We furthermore require that $\dist_{\mathrm H}(G_s,\,\Omega)$ is much smaller 
than the smallest value among $\{\diam(\gamma_j)\}_j$; notice that this is a 
finite collection. Then, if we extend the end point $z_j\in \partial \Omega$ of 
each $\gamma_j$ to one of the nearest points on $\partial G_s$, formulas 
similar to \eqref{curve separation} and \eqref{set separation} still hold for 
the new curves and sets. Consequently a decomposition of $G_s$ with a 
corresponding partition of unity can also be constructed via the essence of 
Section~\ref{Decomposition and partition of unity}.

Thus an argument similar to the proof of Theorem~\ref{mainthm 1} can 
be employed  for $G_s$. Since $G_s$ is a Lipschitz domain, we can extend 
functions in 
$W^{1,\,\infty}(G_s)$ to global Lipschitz functions. Hence we get a sequence of 
global Lipschitz functions approximating $u$ in $W^{1,\,p}(\Omega)$-norm. 
Applying suitable mollifiers and following a standard diagonal argument, 
we obtain a sequence of global smooth functions as desired. 
\end{proof}

\end{document}